\newtheorem{theorem}{Theorem}
\newtheorem{cor}[theorem]{Corollary}
\newtheorem{lemma}[theorem]{Lemma}
\newtheorem{remark}[theorem]{Remark}
\newcommand{\bs}{\bigskip}
\newcommand{\n}{\noindent}
\newcommand{\ds}{\displaystyle}
\newcommand{\Ba}{\ensuremath{\mathcal{B}}}
\newcommand{\Pa}{\ensuremath{\mathcal{P}}}
\newcommand{\F}{\ensuremath{\mathbb{F}}}
\newcommand{\ta}{\Theta}
\newcommand{\de}{\delta}
\newcommand{\pg}{{\rm PG}}
\newcommand{\md}{\;{\rm mod}\;}
\newcommand{\Hp}{{\mathcal H}}
\begin{document}
\title[The maximum size of a partial spread]{The maximum  size of a partial spread in a finite projective space}
\author[Esmeralda L. N\u{a}stase and Papa A. Sissokho]{\tiny{Esmeralda L. N\u{a}stase} \\ \\
 Mathematics Department\\ Xavier University\\
Cincinnati, Ohio 45207, USA\\ \\
Papa A. Sissokho  \\ \\
Mathematics Department \\ Illinois State University\\ 
Normal, Illinois 61790, USA}
\thanks{nastasee@xavier.edu,  psissok@ilstu.edu}
\begin{abstract}
Let $n$ and $t$ be positive integers with $t<n$, and let $q$ be a prime power. 
A {\em partial $(t-1)$-spread} of ${\rm PG}(n-1,q)$ is a set of $(t-1)$-dimensional subspaces of ${\rm PG}(n-1,q)$ 
that are pairwise disjoint.
Let $r=n\mbox{ mod } t$ and $0\leq r<t$. We prove that  if $t>(q^r-1)/(q-1)$,
 then the maximum size, i.e., cardinality, of a partial $(t-1)$-spread of ${\rm PG}(n-1,q)$ is $(q^n-q^{t+r})/(q^t-1)+1$. 
 This essentially settles a main open problem in this area. Prior to this 
 result, this maximum size was only known for $r\in\{0,1\}$ and for $r=q=2$.
\end{abstract}
\maketitle
\n \keywords{\small{\bf Keywords:} Galois geometry; partial spreads; subspace partitions; subspace codes; $q$-Kneser graphs.} 

\bs\n\keywords{\small{\bf Mathematics Subject Classification:} 51E23; 05B25; 94B25.}
\section{Introduction}\label{sec:1}
Let $n$ and $t$ be positive integers with $t<n$, and let $q$ be a prime power.
Let $\pg(n-1,q)$ denote the $(n-1)$-dimensional 
projective space over the finite field $\F_q$.  
A {\em partial $(t-1)$-spread} $S$ of $\pg(n-1,q)$ is a collection 
of $(t-1)$-dimensional subspaces of 
$\pg(n-1,q)$ that are pairwise disjoint.
If $S$ contains all the points of $\pg(n-1,q)$, then it is called 
a {\em $(t-1)$-spread}. It follows from the work of Andr\'e~\cite{An} (also see~\cite[p. 29]{De}) that a $(t-1)$-spread of $\pg(n-1,q)$ exists if and only if $t$ divides $n$.

Given positive integers $n$ and $t$ with $t<n$, the problem of finding the maximum size, i.e., cardinality, of a partial $(t-1)$-spread of 
$\pg(n-1,q)$ is rather a natural one.
It is directly related to the general problem of classifying the {\em maximal} partial $(t-1)$-spread.
A maximal partial $(t-1)$-spread is a set of pairwise disjoint $(t-1)$-dimensional subspaces which cannot be extended to a 
larger set. This problem has been extensively studied~\cite{EiStSz,GaSz,He1,JuSt}. 
Besides their traditional relevance to Galois geometry, partial $(t-1)$-spreads are used to build byte-correcting codes (e.g., see~\cite{Et1,HP}), $1$-perfect mixed
error-correcting codes (e.g., see~\cite{HeSc,HP}), orthogonal arrays and $(s,k,\lambda)$-nets (e.g., see~\cite{DF}). More recently, partial $(t-1)$-spreads have also attracted renewed attention since they can be viewed as {\em subspace codes}. In Section~\ref{sec:conc}, we shall say more about the connection between our results and   subspace codes.

Let $\mu_q(n,t)$ denote the maximum size
of any partial $(t-1)$-spread of $\pg(n-1,q)$.
The problem of determining $\mu_q(n,t)$ is a long standing open problem. 
A general upper bound for $\mu_q(n,t)$ is given by the following theorem of Drake and Freeman~\cite{DF}.
\begin{theorem}\label{DF}
Let $r=n\md{t}$ and $0\leq r<t$. Then 
\[\mu_q(n,t)\leq \ds\frac{q^n-q^r}{q^t-1}-\lfloor\omega\rfloor-1,\]
where $2\omega=\sqrt{4q^t(q^t-q^r)+1}-(2q^t-2q^r+1)$.
\end{theorem}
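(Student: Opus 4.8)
The plan is to pass from the spread to its complementary set of \emph{holes} and to bound the number of holes from below. Writing $\theta_k=(q^k-1)/(q-1)$ for the number of points of $\pg(k-1,q)$, a partial $(t-1)$-spread $S$ with $|S|=N$ covers exactly $N\theta_t$ points, so the number of holes is $\delta=\theta_n-N\theta_t$. Since $q^n\equiv q^r\pmod{q^t-1}$, one checks that $\delta\equiv\theta_r\pmod{\theta_t}$, and a short computation shows that the asserted inequality $N\le (q^n-q^r)/(q^t-1)-\lfloor\omega\rfloor-1$ is equivalent to the lower bound $\delta\ge\theta_r+(\lfloor\omega\rfloor+1)\theta_t$ on the number of holes. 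Thus everything reduces to proving a sharp lower bound for $\delta$.

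The main tool is incidence counting against hyperplanes. For a hyperplane $H$, let $x_H$ be the number of holes lying in $H$ and $s_H$ the number of members of $S$ contained in $H$; every member not inside $H$ meets $H$ in a $(t-2)$-space. Counting the points of $H$ covered by $S$ gives the identity $x_H=c-s_Hq^{t-1}$, where $c=\theta_{n-1}-N\theta_{t-1}$ is independent of $H$. In particular $0\le x_H\le c$ and $x_H\equiv c\equiv\delta\pmod{q^{t-1}}$, so, setting $\rho=\delta\bmod q^{t-1}$, we have $\rho\le x_H\le c$ for every $H$. Double counting incidences of holes, and of pairs of holes, with hyperplanes then yields the two exact moments $\sum_H x_H=\delta\theta_{n-1}$ and $\sum_H\binom{x_H}{2}=\binom{\delta}{2}\theta_{n-2}$, using that a point lies in $\theta_{n-1}$ hyperplanes and a pair of points in $\theta_{n-2}$ of them.

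Next I would exploit that every $x_H$ lies in the interval $[\rho,c]$, so that $(x_H-\rho)(c-x_H)\ge 0$ for each $H$. Summing this over all $\theta_n$ hyperplanes and substituting the two moments converts it into a single quadratic inequality in $\delta$, in which $c$ is itself affine in $\delta$ because $N=(\theta_n-\delta)/\theta_t$. The coefficients collapse using the identities $\theta_{n-1}^2-\theta_n\theta_{n-2}=q^{n-2}$ and $\theta_{n-1}\theta_t-\theta_{t-1}\theta_n=q^{t-1}\theta_{n-t}$, and solving the resulting quadratic for its larger root should reproduce exactly $\delta\ge\theta_r+(\lfloor\omega\rfloor+1)\theta_t$; the smaller root is discarded since $\delta\ge\theta_r$ always holds. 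Here $\omega$ appears as a shift of the positive solution of $\omega(\omega+2q^t-2q^r+1)=(q^r-1)(q^t-q^r)$, which is precisely the quadratic whose root is the closed form $2\omega=\sqrt{4q^t(q^t-q^r)+1}-(2q^t-2q^r+1)$ in the statement.

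I expect the main obstacle to be this last step: choosing the convexity inequality so that it is tight at the extremal configuration (where the holes number $q^t\theta_r$ and meet each hyperplane in $0$ or $q^{t-1}$ further points, forcing $\rho=0$) and then carrying out the algebra so that the unwieldy quadratic collapses to the exact closed form for $\omega$ rather than to a strictly weaker bound. Pinning down the residue $\rho=\delta\bmod q^{t-1}$ in terms of $r$, and converting the real inequality $\delta\ge\theta_r+(\omega+1)\theta_t$ into the integer statement involving $\lfloor\omega\rfloor$, are the delicate bookkeeping points; by contrast, the reduction to holes and the two moment identities are routine.
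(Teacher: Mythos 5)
The paper itself gives no proof of Theorem~\ref{DF}: it is quoted from Drake and Freeman, whose original argument runs through $(s,r,\mu)$-nets and a Bose--Bruck type inequality. So your proposal must stand on its own, and its general framework is in fact the modern alternative route to such bounds (it is essentially how Kurz and Honold--Kiermaier--Kurz reprove and sharpen Drake--Freeman): the passage to holes, the equivalence of the stated bound with $\delta\ge\theta_r+(\lfloor\omega\rfloor+1)\theta_t$, the identity $x_H=c-s_Hq^{t-1}$, the divisibility $x_H\equiv\delta\pmod{q^{t-1}}$, and the two moment identities $\sum_H x_H=\delta\theta_{n-1}$, $\sum_H\binom{x_H}{2}=\binom{\delta}{2}\theta_{n-2}$ are all correct.

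The genuine gap is your choice of term-wise quadratic: summing $(x_H-\rho)(c-x_H)\ge 0$ is too weak to yield the theorem, and provably so. The right-hand root $c$ lies far above the average $\sum_H x_H/\theta_n=\delta\theta_{n-1}/\theta_n\approx\delta/q$, so the summed inequality is nearly vacuous. Concretely, take $q=2$, $t=3$, $n=8$ (so $r=2$), where the Drake--Freeman bound is $36-\lfloor\omega\rfloor-1=34$, and test $N=35$. Then $\delta=255-7\cdot 35=10$, $c=127-3\cdot 35=22$, $\rho=10\bmod 4=2$, and the moments over the $255$ hyperplanes give $\sum_H x_H=10\cdot 127=1270$ and $\sum_H x_H^2=10\cdot 9\cdot 63+1270=6940$. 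Your inequality evaluates to $-6940+24\cdot 1270-2\cdot 22\cdot 255=12320\ge 0$: no contradiction, so your argument cannot exclude $N=35$, hence cannot prove the stated bound, and in particular the ``resulting quadratic'' can never collapse to the equation $\omega(\omega+2q^t-2q^r+1)=(q^r-1)(q^t-q^r)$. The missing idea is to place the two roots at \emph{consecutive attainable values} of $x_H$ bracketing the mean rather than at the endpoints of its range: here $x_H\equiv 2\pmod 4$, so $(x_H-2)(x_H-6)\ge 0$ holds term by term, yet the moments force $\sum_H(x_H-2)(x_H-6)=6940-8\cdot 1270+12\cdot 255=-160<0$, the desired contradiction. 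In general one must use $(x_H-a)(x_H-a-q^{t-1})\ge 0$ with $a\equiv\rho\pmod{q^{t-1}}$ chosen so that $a<\delta\theta_{n-1}/\theta_n<a+q^{t-1}$, and the algebra with that inequality is what produces $\omega$. A secondary error: $\rho$ is not determined by $r$ alone (here $\delta\equiv 3+N\pmod 4$, so $\rho$ depends on $N$); it must be carried as a parameter or handled by optimizing over admissible residues, which is exactly the bookkeeping that makes the floor $\lfloor\omega\rfloor$ come out.
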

The following result is due to Andr\'e~\cite{An} for $r=0$. For $r=1$, it is due to
Hong and Patel~\cite{HP} when $q=2$, 
and Beutelspacher~\cite{Be} when $q>2$.
\begin{theorem}\label{BHP}
Let $r=n\md{t}$ and $0\leq r<t$. Then 
\[\mu_q(n,t)\geq \ds\frac{q^n-q^{t+r}}{q^t-1}+1,\] 
where equality holds if $r\in\{0,1\}$.
\end{theorem}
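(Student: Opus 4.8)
The plan is to prove Theorem~\ref{BHP} by establishing the lower bound for all $r$ and then verifying that it is sharp when $r\in\{0,1\}$. The lower bound asserts the existence of a partial $(t-1)$-spread of $\pg(n-1,q)$ of size $(q^n-q^{t+r})/(q^t-1)+1$, so my strategy is explicitly constructive. Write $n=kt+r$ with $0\le r<t$. The key idea is to exploit André's theorem, which gives a genuine (complete) $(t-1)$-spread whenever $t-1\mid n-1$; I would carve $\pg(n-1,q)$ into a large subspace that admits a full spread plus a small leftover piece, and then cover as much of the leftover as possible.

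Concretely, I would first embed a subspace $\pg(kt-1,q)$ inside $\pg(n-1,q)$. Since $t\mid kt$, André's result supplies a $(t-1)$-spread of this subspace consisting of exactly $(q^{kt}-1)/(q^t-1)$ disjoint $(t-1)$-subspaces. That already accounts for the dominant term. The remaining points of $\pg(n-1,q)$ lie outside $\pg(kt-1,q)$; there are $q^{kt}\cdot(q^r-1)/(q-1)$ of them, and I would cover a single additional $(t-1)$-subspace worth of these leftover points by choosing a $(t-1)$-subspace that meets the already-used region trivially. The arithmetic to reconcile is that
\[
\frac{q^{kt}-1}{q^t-1}+1=\frac{q^{kt}-1+q^t-1}{q^t-1},
\]
and one checks this matches $(q^n-q^{t+r})/(q^t-1)+1$ only after accounting for the leftover dimension; so rather than patch a full spread of $\pg(kt-1,q)$, I would instead take a spread of a cleverly chosen large subspace and adjoin disjoint blocks filling the complement, counting carefully so the total equals the claimed value. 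The honest bookkeeping is to verify that the partial spread constructed has precisely $(q^n-q^{t+r})/(q^t-1)+1$ members and that all chosen $(t-1)$-subspaces are pairwise disjoint.

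For the sharpness claim when $r=0$, the bound coincides with André's exact count for a complete spread, so equality is immediate. For $r=1$, I would invoke the Beutelspacher and Hong--Patel results as cited; the substance there is an upper bound matching the construction, which for $r=1$ can be obtained by a counting or averaging argument on how points distribute among the spread elements versus the holes, combined with a divisibility obstruction that forbids one extra block. The main obstacle I anticipate is getting the disjointness and the exact cardinality to align simultaneously: it is easy to build many pairwise disjoint $(t-1)$-subspaces, but pinning the count to exactly $(q^n-q^{t+r})/(q^t-1)+1$ requires choosing the embedding of the auxiliary spread and the extra block so that no point is double-covered and the leftover is filled optimally. I expect the cleanest route is to phase the construction through a spread of a maximal subspace of the right dimension and then argue that exactly one further block can always be adjoined, which is where the $+1$ originates.
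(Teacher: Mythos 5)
Your construction fails at its central step, for a concrete dimension-theoretic reason. Write $n=kt+r$ with $1\le r\le t-1$ (for $r=0$ the theorem is indeed immediate from Andr\'e, and for $k=1$ the bound is the trivial value $1$, so assume $k\ge 2$). If $A$ is the $t$-dimensional vector subspace underlying any $(t-1)$-subspace of $\pg(n-1,q)$ and $B$ is the $kt$-dimensional subspace underlying your $\pg(kt-1,q)$, then $\dim(A\cap B)\ge \dim A+\dim B-n=t-r\ge 1$. So \emph{every} $(t-1)$-subspace of $\pg(n-1,q)$ meets $\pg(kt-1,q)$ in at least a $(t-r-1)$-subspace of points. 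Consequently, once you take a full $(t-1)$-spread of $\pg(kt-1,q)$, not a single further block can be adjoined: the ``additional $(t-1)$-subspace that meets the already-used region trivially'' does not exist. Your partial spread is stuck at size $(q^{kt}-1)/(q^t-1)=1+q^t+q^{2t}+\cdots+q^{(k-1)t}$, which is strictly \emph{smaller} than the target $(q^n-q^{t+r})/(q^t-1)+1=1+q^{t+r}+q^{2t+r}+\cdots+q^{n-t}$ whenever $r\ge 1$; it is a maximal partial spread that is far from maximum. The fallback you sketch (``a spread of a cleverly chosen large subspace,'' ``counting carefully'') is precisely the missing mathematical content, not an argument.

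The construction that actually works (Beutelspacher's; note the paper itself never proves Theorem~\ref{BHP}, it only cites Andr\'e, Hong--Patel, and Beutelspacher) rests on a different lemma, which Andr\'e's theorem does not supply: if $W\subset U$ are $\F_q$-spaces with $\dim U=s+t$, $\dim W=s$, and $s\ge t$, then the vectors of $U\setminus W$ can be partitioned by $q^s$ subspaces of dimension $t$, each meeting $W$ only in $0$. (Identify $U$ with $\F_{q^s}\times\F_{q^t}$ and $W$ with $\F_{q^s}\times\{0\}$, fix an $\F_q$-linear injection $\iota:\F_{q^t}\to\F_{q^s}$ --- this is where $s\ge t$ is used --- and take the graphs $U_\alpha=\{(\alpha\,\iota(x),x):x\in\F_{q^t}\}$ for $\alpha\in\F_{q^s}$; for $x\ne 0$ the equation $y=\alpha\,\iota(x)$ determines $\alpha$ uniquely.) Now take a chain $V(n,q)=U_0\supset U_1\supset\cdots\supset U_{k-1}$ with $\dim U_i=n-it$, so that $\dim U_{k-1}=t+r\ge t$; partition each difference $U_{i-1}\setminus U_i$ into $q^{n-it}$ subspaces of dimension $t$ by the lemma, and place one final $t$-subspace inside $U_{k-1}$. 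All blocks are pairwise disjoint away from $0$, and their number is $q^{n-t}+q^{n-2t}+\cdots+q^{t+r}+1=(q^n-q^{t+r})/(q^t-1)+1$, proving the inequality; the $+1$ comes from the single block placed inside the small space $U_{k-1}$, not from extending a spread of $\pg(kt-1,q)$. Your treatment of the equality claim is fine in kind: for $r=0$ equality is Andr\'e's count, and for $r=1$ the matching upper bound is the substance of Hong--Patel ($q=2$) and Beutelspacher ($q>2$), which this paper likewise quotes rather than reproves.
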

In light of Theorem~\ref{BHP}, it was conjectured (e.g., see~\cite{EiSt,HP}) that the value of $\mu_q(n,t)$ is given by the lower bound in Theorem~\ref{BHP}. However, this conjecture was disproved by El-Zanati et al.~\cite{EJSSS} who proved the following result.
\begin{theorem}\label{EJSSS} If $n\geq8$ and $n\md{3}=2$, then
$\mu_2(n,3)= \ds\frac{2^n-2^{5}}{7}+2$.
\end{theorem}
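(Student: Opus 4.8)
The plan is to prove the exact value $\mu_2(n,3)=(2^n-2^5)/7+2$ when $n\geq 8$ and $n\equiv 2\pmod 3$. Since the lower bound is given by Theorem~\ref{BHP} with $q=2$, $t=3$, $r=2$, which yields $(2^n-2^5)/7+1$, the substance of this result is twofold: first, improving the lower bound by exhibiting a partial spread that is one larger than the Beutelspacher--Hong--Patel construction; and second, proving a matching upper bound $\mu_2(n,3)\leq(2^n-2^5)/7+2$ that beats the Drake--Freeman bound of Theorem~\ref{DF}. I expect the upper bound to be the crux, and I would organize the argument around the combinatorics of the ``hole set'' (the points not covered by the spread).

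\medskip

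For the upper bound, I would set up the standard counting framework. Suppose $S$ is a partial $2$-spread (a set of pairwise disjoint planes, i.e.\ $3$-dimensional subspaces over $\F_2$) of $\pg(n-1,2)$ of size $|S|$, and let $H$ be the set of uncovered points, the \emph{holes}. Each plane contains $2^3-1=7$ points, so $|S|\cdot 7 + |H| = 2^n-1$, which pins down $|H|$ in terms of $|S|$; maximizing $|S|$ is equivalent to minimizing $|H|$. The key structural fact I would exploit is how a hyperplane meets the configuration: any hyperplane of $\pg(n-1,2)$ either contains a given plane of $S$ or meets it in a subspace of projective dimension $1$ (a line, $3$ points) or $2$ less. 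Counting incidences of planes and holes against hyperplanes through a fixed subspace yields congruence and divisibility constraints on $|H|$ modulo powers of $2$. The aim is to show that the minimum admissible $|H|$ consistent with these constraints, given $n\equiv 2\pmod 3$, forces $|S|\leq(2^n-2^5)/7+2$.

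\medskip

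The main obstacle, and where I would concentrate effort, is ruling out the one extra plane that the naive bound would permit. The classical approach (due to Beutelspacher and refined by later authors, e.g.\ in~\cite{EiSt}) analyzes the hole set $H$ as a blocking-type configuration: one shows that $H$ together with the intersection pattern of hyperplanes forces $H$ to contain, or be contained in, a low-dimensional subspace whose size is rigidly constrained mod $7$. For the case $r=2$, $q=2$, the residue arithmetic is delicate precisely because $r^2=4<7=(q^3-1)/(q-1)$, so the Drake--Freeman square-root term $\lfloor\omega\rfloor$ is not tight, and one must instead use a more careful argument on the distribution of holes among hyperplanes, possibly via a second-moment or variance computation on the number of holes per hyperplane. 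I would derive the average number of holes in a hyperplane from the double count, then show that the variance is too small to permit the hole count associated with $|S|=(2^n-2^5)/7+3$, obtaining a contradiction.

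\medskip

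For the matching lower bound, I would construct an explicit partial spread attaining the claimed size. The natural starting point is to write $n=3m+2$ and use a spread of a large subspace: take $\pg(n-1,2)$, isolate a $\pg(3m-1,2)$ admitting a genuine $2$-spread (since $3\mid 3m$ when $3-1=2\mid 3m-1$ fails, I would instead decompose via a $(t-1)$-spread of a suitable subspace and handle the residual $5$-dimensional complement $\pg(4,2)$ carefully). Concretely, I would combine a spread of planes covering most of the space with a clever packing of a few extra planes into the leftover piece determined by the residue $r=2$, gaining exactly one plane over the Beutelspacher bound by exploiting the same residual structure in $\pg(4,2)$ that the $r=2$, $q=2$ case makes available. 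Verifying pairwise disjointness of the constructed planes and counting them to confirm the size $(2^n-2^5)/7+2$ is then a direct, if careful, computation.
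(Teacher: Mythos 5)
You should first note that the paper contains no proof of this statement at all: Theorem~\ref{EJSSS} is quoted as prior work and attributed to~\cite{EJSSS}, so the comparison must be made against that published proof and against the machinery this paper develops for related cases. Your high-level diagnosis is correct: one needs (i) a construction with one more plane than Theorem~\ref{BHP} provides, and (ii) an upper bound strictly below the Drake--Freeman bound of Theorem~\ref{DF}. But both halves of your plan stop at the level of intention, and the lower-bound sketch cannot work as written. Any construction of the form ``spread of a large subspace plus a few extra planes packed into the leftover piece determined by the residue $r=2$'' is exactly Beutelspacher's recursive construction and tops out at $(2^n-2^5)/7+1$: the set of uncovered points is not a subspace, and inside any copy of $V(5,2)$ two $3$-dimensional $\F_2$-subspaces must intersect nontrivially (since $3+3>5$), so there is no ``residual structure in $\pg(4,2)$'' from which to harvest the extra plane. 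In~\cite{EJSSS} the gain of one plane comes from an explicit, ad hoc packing of $34=(2^8-2^5)/7+2$ pairwise disjoint $3$-subspaces of $V(8,2)$, which is then propagated to all $n\equiv 2\pmod 3$, $n\geq 8$, by the standard recursion that adds $2^n$ planes in passing from $V(n,2)$ to $V(n+3,2)$; your proposal contains no candidate for this base construction.

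On the upper bound, your ingredients (hole set, hyperplane incidences, congruences, moments) are the right family of ideas---indeed they are what this paper formalizes in Lemma~\ref{lem:main}---but you derive none of them, and the case at hand is precisely the boundary case $t=\ta_r$ (here $t=3=\ta_2$) that the hypothesis of Theorem~\ref{thm1} excludes, which is where the naive argument breaks. Concretely: assuming a partial spread of size $(2^n-2^5)/7+3$ leaves exactly $10$ holes; Lemma~\ref{lem:HeLe0} then forces $b_{H,1}\equiv 2\pmod 4$ for every hyperplane $H$, and averaging produces a hyperplane with exactly $2$ holes; but after descending into that hyperplane, $2$-dimensional blocks appear in the induced partition and the resulting mod-$4$ condition can be satisfied with $b_{H,1}=0$, so no contradiction arises and the descent dies. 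This is exactly why the present paper must state the $t=\ta_r$ bound separately (Lemma~\ref{lem:extra}, which for $q=2$, $t=3$ gives precisely $(2^n-2^5)/7+2$) and leaves it unproved. Closing that gap requires a finer structural analysis of the $10$-hole configuration, as carried out in~\cite{EJSSS}; an appeal to ``a second-moment or variance computation'' is a heuristic, not an argument, and as stated your proposal does not contain the decisive step for either bound.
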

Very recently, Kurz~\cite{K} proved the following theorem which upholds the lower bound for $\mu_q(n,t)$ when $q=2$, $r =2$, and 
$t > 3$. 
\begin{theorem}\label{K} If $n>t>3$ and $n\md{t}=2$, then
\[\mu_2(n,t)=\ds\frac{2^n-2^{t+2}}{2^t-1}+1.\]
\end{theorem}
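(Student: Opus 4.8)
The plan is to keep the lower bound from Theorem~\ref{BHP} (specialized to $q=2$, $r=2$) and to establish the matching upper bound $\mu_2(n,t)\le(2^n-2^{t+2})/(2^t-1)+1$ by a close analysis of the uncovered points. Write a partial $(t-1)$-spread as a set $S$ of $\sigma$ mutually disjoint $t$-dimensional subspaces of $\F_2^n$, and let $H$ be its set of \emph{holes}, i.e.\ the points of $\pg(n-1,2)$ covered by no member of $S$, so that $|H|=2^n-1-\sigma(2^t-1)$. Reducing modulo $2^t-1$ and using $n\equiv2\pmod t$ gives $|H|\equiv3\pmod{2^t-1}$; hence $\sigma$ exceeds the claimed maximum exactly when $|H|<3\cdot2^t$, which forces $|H|\in\{3,\,2^t+2,\,2^{t+1}+1\}$. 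It therefore suffices to rule out these three hole counts.

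The structural engine is a divisibility property. First I would show, by double counting the members of $S$ inside and outside a hyperplane, that $|H\cap\pi|\equiv|H|\pmod{2^{t-1}}$ for every hyperplane $\pi$; that is, $H$ is a $2^{t-1}$-divisible set. We may then work inside $V=\langle H\rangle$, where $H$ remains $2^{t-1}$-divisible and spans, so no hyperplane of $V$ contains all of $H$. A recursive observation is that the hyperplane sections of a $2^{e}$-divisible set are $2^{e-1}$-divisible: summing $|H\cap\pi_i|$ over the three hyperplanes through a codimension-$2$ subspace $U$ gives $|H|+2|H\cap U|$, and reducing modulo $2^{e}$ yields $|H\cap U|\equiv|H|\pmod{2^{e-1}}$. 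Combined with the standard fact that a nonempty $2^{e}$-divisible binary projective code has length at least $2^{e+1}-1$, this shows that every nonzero value $|H\cap\pi|$ is the size of a $2^{t-2}$-divisible set, hence is $0$ or at least $2^{t-1}-1$.

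With these tools the first two cases fall quickly. If $|H|=3$ then $0\le|H\cap\pi|\equiv3\pmod{2^{t-1}}$ forces $|H\cap\pi|=3$ on every hyperplane of $V$, contradicting that $H$ spans $V$. If $|H|=2^t+2$ then $|H\cap\pi|\in\{2,\,2^{t-1}+2,\,2^t+2\}$; the value $2^t+2=|H|$ is excluded by spanning and $2$ by the minimum-length bound (since $2^{t-1}-1\ge7$ for $t\ge4$), leaving $|H\cap\pi|=2^{t-1}+2$ for \emph{every} hyperplane. Substituting this single value into the counting identities $\sum_\pi1=2^m-1$ and $\sum_\pi|H\cap\pi|=|H|(2^{m-1}-1)$, where $m=\dim V$, gives a nonzero discrepancy, so $|H|=2^t+2$ cannot occur.

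The main obstacle is $|H|=2^{t+1}+1$. Here $|H\cap\pi|\equiv1\pmod{2^{t-1}}$, so a priori $|H\cap\pi|\in\{1,\,2^{t-1}+1,\,2^t+1,\,3\cdot2^{t-1}+1,\,2^{t+1}+1\}$; the top value is killed by spanning and $1$ by the minimum-length bound. If one can \emph{also} exclude $|H\cap\pi|=2^{t-1}+1$, then only $2^t+1$ and $3\cdot2^{t-1}+1$ survive, and the same two counting identities then force a negative number of hyperplanes of one type, a contradiction. Excluding $2^{t-1}+1$ is the crux: by the recursion it amounts to showing that \emph{no $2^{t-2}$-divisible binary projective code has length $2^{t-1}+1$}, i.e.\ that this length lies in the initial gap of non-realizable lengths of $2^{t-2}$-divisible codes. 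This is exactly where $t>3$ is indispensable. For $t=3$ the relevant parameter is $e=t-2=1$, and a $2$-divisible (even-weight) code of length $2^{t-1}+1=5$ \emph{does} exist, which is the source of the strictly larger maximum in Theorem~\ref{EJSSS}; for $t\ge4$ one has $e\ge2$ and the classification of short $2^{e}$-divisible codes forbids the length. I would therefore isolate and prove (or invoke from the theory of $q^r$-divisible codes) this non-existence statement for $e\ge2$ as the decisive lemma, after which the argument above disposes of the last case and yields the theorem.
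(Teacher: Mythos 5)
Your proposal is correct in substance, and it takes a genuinely different route from the paper. The paper never isolates the set of uncovered points as an object of study: its proof (Lemma~\ref{lem:main}, specialized to $q=2$, $r=2$, so $\ta_r=3$) runs an induction down a chain of hyperplanes $H_0\supset H_1\supset H_2$, at each step completing the partial spread to a subspace partition, using the Heden--Lehmann identities (Lemmas~\ref{lem:HeLe0} and~\ref{lem:HeLe1}) to choose, by averaging, a hyperplane containing few holes, while a congruence modulo a decreasing power of $q$ (equation~\eqref{eq:8}) forces the hole count to stay large; at the last step the two constraints collide. You instead fix the hole set $H$ once and for all, reduce to the three possible sizes $3$, $2^t+2$, $2^{t+1}+1$, and exploit the fact that $H$ is a $2^{t-1}$-divisible point set; this is the ``$q^r$-divisible code'' viewpoint later developed by Honold, Kiermaier and Kurz, and it differs both from this paper's argument and from Kurz's original proof of the statement, which are descendants of Beutelspacher's method. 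The two routes share their arithmetic core: your identities $\sum_\pi 1=2^m-1$ and $\sum_\pi|H\cap\pi|=|H|(2^{m-1}-1)$ are exactly Lemma~\ref{lem:HeLe1} with $d=1$, and your recursion (hyperplane sections of $2^e$-divisible sets are $2^{e-1}$-divisible) plays the role of the paper's descending modulus. What your route buys is structural insight: it explains precisely why $t=3$ is exceptional (the five-point set meeting every hyperplane oddly, i.e., the even-weight code of length $5$, is what makes Theorem~\ref{EJSSS} possible) and localizes all difficulty in one clean nonexistence statement. What the paper's route buys is uniformity: the same averaging induction works verbatim for every $q$ and every $r$ with $t>\ta_r$, yielding Theorem~\ref{thm1}, whereas your three-case analysis is tied to $q=2$, $r=2$.

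One caution and one improvement concerning your deferred ``decisive lemma'' (no $2^{t-2}$-divisible binary projective code of length $2^{t-1}+1$ for $t\ge4$). Invoking it from the literature is mathematically sound but historically awkward, since the classification of lengths of divisible codes postdates, and was partly motivated by, the theorem you are proving. Fortunately you do not need to import it: it follows from the machinery you already set up. If $D$ were such a set, spanning $W$ with $\dim W=w$, then its nonempty hyperplane sections are $2^{t-3}$-divisible, hence of size at least $2^{t-2}-1>1$ by your minimum-length bound (this is exactly where $t\ge4$ enters; for $t=3$ the bound is vacuous); since the sections are also congruent to $|D|\equiv 1 \pmod{2^{t-2}}$ and, by spanning, strictly smaller than $|D|=2^{t-1}+1$, they all equal $2^{t-2}+1$. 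Substituting this single value into the same two counting identities gives $(2^w-1)(2^{t-2}+1)=(2^{t-1}+1)(2^{w-1}-1)$, which simplifies to $2^{w-1}=-2^{t-2}$, a contradiction. With that short argument inserted, your proof is complete and self-contained.
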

In this paper, we prove that the conjectured value of $\mu_q(n,t)$ holds for almost all values of the parameters $n$, $q$, and $t$. 
The following theorem, which is our main result, 
 generalizes Theorem~\ref{BHP} (set $r=0$ or $r=1$) and Theorem~\ref{K} (set $r=2$ and $q=2$). In particular, this is the first comprehensive result with the exact value of $\mu_q(n,t)$ for almost all values of the parameters $n$, $q$, and $t$. 
\begin{theorem}\label{thm1}
Let $r=n\md{t}$ and $0\leq r<t$. If $t>(q^r-1)/(q-1)$, then 
\[\mu_q(n,t)=\ds\frac{q^n-q^{t+r}}{q^t-1}+1.\]
\end{theorem}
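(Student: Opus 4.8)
Since the lower bound $\mu_q(n,t)\ge (q^n-q^{t+r})/(q^t-1)+1$ is exactly Theorem~\ref{BHP}, the whole task is to prove the matching upper bound, and the plan is to recast it as a statement about \emph{holes}. Let $S$ be a partial $(t-1)$-spread of maximum size $s=\mu_q(n,t)$, and let $h$ be the number of points of $\pg(n-1,q)$ covered by no member of $S$. Writing $\theta_{m}=(q^{m+1}-1)/(q-1)$ for the number of points of $\pg(m,q)$, pairwise disjointness gives $h=\theta_{n-1}-s\,\theta_{t-1}$, and a direct computation shows that the claimed value of $s$ corresponds to exactly $h=q^{t}\theta_{r-1}$. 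Hence the upper bound $\mu_q(n,t)\le (q^n-q^{t+r})/(q^t-1)+1$ is equivalent to the lower bound $h\ge q^{t}\theta_{r-1}$ on the number of holes. First I would adjoin each hole as a point to $S$, obtaining a subspace partition of $\pg(n-1,q)$ whose parts have dimensions $t-1$ and $0$; this is the object on which all the counting is done.

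Next I would pin down the arithmetic of $h$. From $h=\theta_{n-1}-s\,\theta_{t-1}$ together with $n\equiv r\pmod t$ one obtains the congruence $h\equiv\theta_{r-1}\pmod{\theta_{t-1}}$, so that $h=\theta_{r-1}+j\,\theta_{t-1}$ for some integer $j\ge 0$. Since $q^{t}\theta_{r-1}=\theta_{r-1}+(q-1)\theta_{r-1}\theta_{t-1}$, the target $h\ge q^{t}\theta_{r-1}$ is precisely the assertion $j\ge (q-1)\theta_{r-1}$. The congruence alone yields only $j\ge 0$, so the entire content of the theorem is to force $j$ up to this threshold.

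The main counting tool would be hyperplane sections. For a hyperplane $H$, let $x_H$ be the number of members of $S$ lying inside $H$ and $b_H$ the number of holes in $H$; since a $(t-1)$-flat not contained in $H$ meets $H$ in a $(t-2)$-flat, one gets the identity $b_H=(\theta_{n-2}-s\,\theta_{t-2})-q^{t-1}x_H$, with the parenthesized term independent of $H$. Non-negativity of $b_H$ bounds $x_H$ from above, and summing $b_H$ and $x_H$ over all hyperplanes ties their averages to $s$ and $h$. I would then bound $h$ from below by induction, passing to a hyperplane $H\cong\pg(n-2,q)$, on which the trace of $S$ is a subspace partition with parts of dimensions $t-1$, $t-2$, and $0$ (the last being the holes of $H$); note that this step decreases $r$ by one, so the cases $r\in\{0,1\}$, supplied by Theorem~\ref{BHP}, serve as the base. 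Applying a lower bound for the number of minimal (dimension-$0$) parts of such a partition gives a lower bound on $b_H$, which, fed back through the identity above and the averaging relations, bounds the global hole count $h$ from below.

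The hard part will be this last step, for two reasons. First, a hyperplane section of a partial spread is \emph{not} itself a partial spread: the members of $S$ meet $H$ in flats of two different dimensions, so the induction must be carried out for subspace partitions with several part-sizes, and one must control the number of minimal-dimension parts rather than merely the number of $(t-1)$-flats. This is governed by a lower bound on the number of minimal parts of a subspace partition in terms of its second-smallest dimension, which I would either extract from the theory of subspace partitions or prove directly by a packing argument inside the span of the holes. Second, and most delicately, the bookkeeping must be tight enough to produce the exact threshold: the hypothesis $t>(q^r-1)/(q-1)=\theta_{r-1}$ is exactly the inequality under which each stage of the recursion contributes enough excess holes for the accumulated count to reach $(q-1)\theta_{r-1}$ multiples of $\theta_{t-1}$. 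Verifying that the recursion closes precisely at $t>\theta_{r-1}$, rather than at some larger threshold, is where the real work lies.
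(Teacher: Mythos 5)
Your setup is sound and is, in spirit, the paper's own: the lower bound is Theorem~\ref{BHP}; the upper bound is recast as a lower bound on the number $h$ of holes; adjoining the holes as points yields a subspace partition of type $[t^{n_t},1^{n_1}]$ (this is exactly Remark~\ref{rmk:eq}); and hyperplane counting is the right engine --- your identities $h=\theta_{n-1}-s\theta_{t-1}$ and $b_H=(\theta_{n-2}-s\theta_{t-2})-q^{t-1}x_H$ are equivalent to the Heden--Lehmann relations (Lemmas~\ref{lem:HeLe0} and~\ref{lem:HeLe1}) that the paper uses. But everything you actually carry out (computing $h=q^t\frac{q^r-1}{q-1}$ for the claimed value of $s$, the congruence $h\equiv\frac{q^r-1}{q-1}\pmod{\frac{q^t-1}{q-1}}$, the hyperplane identity) is routine, and the two items you explicitly defer --- the lower bound on the number of minimal-dimension parts of the mixed partitions arising from hyperplane sections, and the verification that the recursion closes precisely at $t>\frac{q^r-1}{q-1}$ --- are not loose ends to be tidied: they \emph{are} the theorem. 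As written, the proposal is a plan with the proof missing.

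Moreover, the induction you describe cannot be repaired in the form stated. You take one hyperplane section per unit decrease of $r$, hence $r-1$ steps, with base $r\in\{0,1\}$ supplied by Theorem~\ref{BHP}. First, the base does not match the inductive statement: after even one step the object is a partition with parts of (vector-space) dimensions $t$, $t-1$ and $1$, about which Theorem~\ref{BHP} says nothing, so the statement being inducted on must concern mixed partitions, and for those you have supplied no base. Second, and decisively, an $(r-1)$-step descent contains no mechanism that could generate the threshold $t>\ta_r=\frac{q^r-1}{q-1}$: the only dimensional constraint it imposes is $t-r+1\geq 2$, i.e., $t>r$. But the theorem is false under $t>r$ alone: for $q=2$, $t=3$, $r=2$ one has $t>r$, yet Theorem~\ref{EJSSS} gives $\mu_2(n,3)=(2^n-2^5)/7+2$, one more than the claimed value, so any argument of your shape would apply to those parameters and prove a false statement. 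The paper's proof (Lemma~\ref{lem:main}) avoids this by arguing by contradiction and descending through $\ta_r$ --- not $r$ --- nested hyperplanes: assuming a partial spread of size $\ell q^t+2$, at stage $j$ averaging produces a hyperplane containing few holes, while Lemma~\ref{lem:HeLe0} forces the hole count in any hyperplane to be $\equiv\de_{t-j}\pmod{q^{t-j-1}}$; consequently the coefficient $c_j$ in $m_{j,1}=c_jq^{t-j}+\de_{t+1-j}$ drops by at least one per stage, and after $\ta_r$ stages the averaging upper bound falls strictly below the congruence-forced minimum, a contradiction. The hypothesis $t>\ta_r$ enters exactly as the condition allowing $\ta_r$ descents while the newly created part dimensions stay above $1$. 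That $\ta_r$-stage congruence bookkeeping, rather than a single ``minimal parts'' lemma at the bottom of an $r$-step recursion, is the substance your proposal is missing.
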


We can use the language of graph theory to reformulate Theorem~\ref{thm1} as follows. 
Let $\Hp_q(n,t)$ be 
the hypergraph whose vertices are the points of $\pg(n-1,q)$
and whose edges are its $(t-1)$-subspaces. Then  $\Hp_q(n,t)$ 
is a $(q^t-1)/(q-1)$-uniform hypergraph.  Now Theorem~\ref{thm1}
implies that if $t>(q^r-1)/(q-1)$, then the maximum size of a matching in $\Hp_q(n,t)$ is $(q^n-q^{t+r})/(q^t-1)+1$.

\bs The general strategy of the proof of Theorem~\ref{thm1}
is due to Beutelspacher who used it to 
prove Theorem~\ref{BHP}. This strategy relies on {\em subspace partitions} which we shall discuss in Section~\ref{sec:2}.
Beutelspacher's  approach was extended 
by Kurz to prove Theorem~\ref{K}. In this paper, we developed 
an averaging argument, which allows us to fully extend Beutelspacher's method and prove our main result (see Theorem~\ref{thm1}) in Section~\ref{sec:3}.
\section{Subspace partitions}\label{sec:2}
Let $V=V(n,q)$ denote the vector space of dimension $n$ over $\F_q$.
For any subspace $U$ of $V$, let $U^*$ denote the set of nonzero vectors in $U$.  A  {\em $d$-subspace} of $V(n,q)$
is a $d$-dimensional subspace of $V(n,q)$; this is equivalent to a {\em $(d-1)$-subspace} in $\pg(n-1,q)$.

A {\em subspace partition} $\Pa$ of $V$, also known as a {\em vector space partition}, is a collection of 
nontrivial subspaces of $V$ such that each vector of $V^*$ is in exactly one subspace of $\Pa$ (e.g., see Heden~\cite{He1} for 
a survey on subspace partitions). 
The {\em size} of a subspace partition $\Pa$ is the number of subspaces in $\Pa$. 

Suppose that there are $s$ distinct vector space dimensions, $d_s>\dots>d_1$, that occur as dimensions of subspaces in a subspace 
partition $\Pa$, and let $n_i$ denote the number of $i$-subspaces in $\Pa$. Then the expression $[d_s^{n_{d_s}},\ldots,d_1^{n_{d_1}}]$ is called the {\em type} of $\Pa$.  
\begin{remark}\label{rmk:eq}
A partial $(t-1)$-spread of $\pg(n-1,q)$ of size $n_t$ is a partial 
$t$-spread of $V(n,q)$ of size $n_t$. This is 
equivalent to a subspace partition of $V(n,q)$ of type 
$[t^{n_t},1^{n_1}]$. 
We will use this subspace partition formulation in the proof 
of Lemma~\ref{lem:main}.
\end{remark}

\bs To state the next lemmas, we need the following definitions.
For any integer $i\geq 1$, let 
\[\ta_i=\frac{q^i-1}{q-1}.\] 
Then, for $i\geq 1$, $\ta_i$ is the number of $1$-subspaces in an $i$-subspace of $V(n,q)$. 
Let $\Pa$ be a subspace partition of $V=V(n,q)$ of type 
$[d_s^{n_{d_s}},\ldots,d_1^{n_{d_1}}]$.
For any hyperplane $H$ of $V$, let $b_{H,d}$ be the number of $d$-subspaces in $\Pa$ that are contained in $H$ and set 
$b_{H}=[b_{H,d_s},\ldots,b_{H,d_1}]$.  
Define the set $\Ba$ of {\em hyperplane types} as follows:
\[\Ba=\{b_H:\; \mbox{$H$ is a hyperplane of $V$}\}.\]
For any $b \in \Ba$, let $s_{b}$ denote the number of hyperplanes of $V$ of type $b$.

\bs
We will also use Lemma~\ref{lem:HeLe0} and Lemma~\ref{lem:HeLe1} by Heden and Lehmann~\cite{HeLe}. 
\begin{lemma}\label{lem:HeLe0}
Let $\Pa$ be a subspace partition of $V(n,q)$ of type 
$[d_s^{n_{d_s}},\ldots,d_1^{n_{d_1}}]$. If $H$ is a hyperplane 
of $V(n,q)$ and $b_{H,d}$ is as defined above, then
\[|\Pa|=1+\sum\limits_{i=1}^s b_{H,d_i}q^{d_i}.\]
\end{lemma}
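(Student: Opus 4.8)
The plan is to prove the identity by counting the nonzero vectors of $V$ in two different ways, exploiting the fact that a subspace partition covers every vector of $V^*$ exactly once. The only geometric input I need is a description of how each block of $\Pa$ meets the hyperplane $H$: if $U\in\Pa$ satisfies $U\subseteq H$, then $U\cap H=U$; otherwise $U+H=V$ since $H$ is a maximal subspace, so the dimension formula $\dim(U\cap H)=\dim U+\dim H-\dim(U+H)$ gives $\dim(U\cap H)=\dim U-1$. Thus every block meets $H$ either in itself or in a subspace of exactly one smaller dimension.

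First I would record the ``global'' count. Since $\Pa$ partitions $V^*$, summing $|U^*|=q^{\dim U}-1$ over the blocks gives
\[
q^n-1=\sum_{U\in\Pa}\bigl(q^{\dim U}-1\bigr)=A+C-|\Pa|,
\]
where I set $A=\sum_{U\subseteq H}q^{\dim U}=\sum_{i=1}^s b_{H,d_i}q^{d_i}$, the quantity appearing in the lemma, and $C=\sum_{U\not\subseteq H}q^{\dim U}$. Next I would write the ``local'' count on $H$: because the nonempty sets $U^*\cap H$ partition $H^*$, the geometric input above yields
\[
q^{n-1}-1=\sum_{U\subseteq H}\bigl(q^{\dim U}-1\bigr)+\sum_{U\not\subseteq H}\bigl(q^{\dim U-1}-1\bigr)=A+\tfrac{1}{q}C-|\Pa|,
\]
where the counts of blocks inside and outside $H$ conveniently cancel.

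Subtracting the two displays eliminates both $A$ and $|\Pa|$ and leaves $\bigl(1-\tfrac{1}{q}\bigr)C=q^n-q^{n-1}$, so $C=q^n$. Substituting back into the global count gives $q^n-1=A+q^n-|\Pa|$, that is, $|\Pa|=A+1=1+\sum_{i=1}^s b_{H,d_i}q^{d_i}$, as claimed. I do not expect a serious obstacle here; the only point demanding care is the dimension drop $\dim(U\cap H)=\dim U-1$ for blocks not contained in $H$, which is precisely where the hypothesis that $H$ is a hyperplane is used, together with the clean bookkeeping of splitting $\Pa$ according to whether each block lies in $H$.
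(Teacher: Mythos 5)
Your proof is correct. Note, however, that the paper itself gives no proof of this statement: Lemma~\ref{lem:HeLe0} is quoted from Heden and Lehmann~\cite{HeLe}, so there is no internal argument to compare against. Your double-counting argument is a valid, self-contained derivation and is essentially the standard one: the identity
\[
\Bigl(1-\tfrac{1}{q}\Bigr)C=q^{n}-q^{n-1},\qquad C=\sum_{U\not\subseteq H}q^{\dim U},
\]
which you obtain by subtracting the count on $H^*$ from the count on $V^*$, is exactly the statement that the blocks not contained in $H$ cover the $q^{n}-q^{n-1}$ vectors outside $H$, each such block $U$ contributing $q^{\dim U}-q^{\dim U-1}$ of them; Heden--Lehmann's argument counts these vectors directly rather than by subtraction. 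Two small points deserve the care you partly gave them: the dimension drop $\dim(U\cap H)=\dim U-1$ for $U\not\subseteq H$ is correctly justified by maximality of $H$; and for one-dimensional blocks $U\not\subseteq H$ the intersection $U^*\cap H$ is empty, which your local count handles automatically because the corresponding term $q^{\dim U-1}-1$ vanishes, so the bookkeeping is sound.
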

\begin{lemma}\label{lem:HeLe1}
Let $\Pa$ be a subspace partition of $V(n,q)$, and let $\Ba$ and $s_b$ be as defined above. Then 
\[\sum\limits_{b\in \Ba} s_b=\ta_n,
\]
and for any $d$-subspace of $\Pa$, the following holds:
\[\sum\limits_{b\in \Ba}b_ds_b=n_d \ta_{n-d}.\]
\end{lemma}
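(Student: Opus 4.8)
The plan is to establish both identities by double counting incidences between the subspaces belonging to $\Pa$ and the hyperplanes of $V=V(n,q)$, exploiting that the set $\Ba$ of hyperplane types partitions the collection of all hyperplanes. Throughout I will use the standard duality fact that the number of hyperplanes (that is, $(n-1)$-subspaces) of an $m$-dimensional space over $\F_q$ equals the number of its $1$-subspaces.

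For the first identity, each hyperplane $H$ of $V$ has a unique type $b_H\in\Ba$, so the numbers $s_b$ count, type by type, every hyperplane of $V$ exactly once; hence $\sum_{b\in\Ba}s_b$ equals the total number of hyperplanes of $V(n,q)$. By the duality fact applied with $m=n$, this total is $\ta_n$, giving $\sum_{b\in\Ba}s_b=\ta_n$.

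For the second identity, I would fix a dimension $d$ and count the set of incident pairs $(W,H)$, where $W$ is a $d$-subspace in $\Pa$ and $H$ is a hyperplane of $V$ containing $W$, in two different ways. Grouping the hyperplanes according to their type $b$, a hyperplane of type $b$ contains exactly $b_d$ of the $d$-subspaces of $\Pa$, and there are $s_b$ such hyperplanes, so the pairs number $\sum_{b\in\Ba}b_d s_b$. Alternatively, fixing $W$ first, the hyperplanes of $V$ containing $W$ are in bijection with the hyperplanes of the quotient $V/W$, a space of dimension $n-d$; by the duality fact applied with $m=n-d$, there are $\ta_{n-d}$ of them, independently of $W$. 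Since $\Pa$ contains $n_d$ subspaces of dimension $d$, the pairs also number $n_d\,\ta_{n-d}$, and equating the two counts proves $\sum_{b\in\Ba}b_d s_b=n_d\,\ta_{n-d}$.

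The argument is essentially pure bookkeeping, so there is no serious obstacle; the one point requiring care is the count of hyperplanes through a fixed $d$-subspace, which I would justify cleanly by passing to the quotient $V/W$ rather than by a direct Gaussian-binomial computation. The conceptual crux is simply to recognize both sums as the same incidence count, organized first by hyperplane type and then by subspace of $\Pa$.
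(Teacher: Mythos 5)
Your proof is correct. The paper itself gives no proof of this lemma---it is quoted from Heden and Lehmann \cite{HeLe}---and your double-counting argument (grouping hyperplanes by type on one side, and counting hyperplanes through a fixed $d$-subspace via the quotient $V/W$ on the other) is precisely the standard argument used to establish these identities in that reference.
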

\section{Proof of Theorem~\ref{thm1}}\label{sec:3}
We use the following notation throughout this section. Let
\begin{equation}\label{l}
\ell=\frac{q^{n-t}-q^{r}}{q^t-1}.
\end{equation}
Then the lower bound for $\mu_q(n,t)$ in Theorem~\ref{BHP} can be written as:
\[\mu_q(n,t)\geq \ell q^t+1.\]
\bs We now prove our main lemma. 
\begin{lemma}\label{lem:main} 
 Let $q$ be a prime power. Let $n$, $t$, and $r$ be integers such that $0\leq r<t<n$ and $r=n\md{t}$.
If $r\geq 1$ and $t>\ta_r$, then 
\[\mu_q(n,t)\leq \ell q^t+1.\]
\end{lemma}
\begin{proof} 
Recall that $\ta_i=(q^i-1)/(q-1)$ for any integer $i\geq 1$. 
For convenience, we also set 
\[\de_i=\frac{q^{i}-2q^{i-1}+1}{q-1}.\]
Since $q\geq2$, we have the following easy facts, which we will use throughout the proof.
\begin{equation}\label{eq:dlt}
0<\de_i<q^{i-1};\;\de_i\md{q^{i-1}}=\de_i;\;1+\de_{i+1}=q\de_{i};\mbox{ and } \frac{\de_{i+1}}{q}<\de_{i}.
\end{equation}

The proof is by contradiction. So assume that $\mu_q(n,t)>\ell q^t+1$. Then $\pg(n-1,q)$ has a
 $(t-1)$-partial spread of size $\ell q^t+2$. Thus, it follows from Remark~\ref{rmk:eq} that there exists a subspace partition $\Pa_0$ of $V(n,q)$ of type $[t^{n_t},1^{n_1}]$,
where 
\begin{multline}\label{eq:ntn1}
n_t=\ell q^t+2\mbox{ and } \\ 
n_1=\left(\frac{q^r-1}{q-1}-1\right)q^t +\frac{q^{t+1}-2q^t+1}{q-1}=(\ta_r-1)q^{t} +\de_{t+1}.
\end{multline}
We will prove by induction that for each integer $j$ with 
$0\leq j\leq \ta_r-1$, there exists
a subspace partition $\Pa_{j}$ of $H_{j}\cong V(n-j,q)$ of type 
\begin{equation}\label{ih}
[t^{m_{j,t}},(t-1)^{m_{j,t-1}}, \dots,(t-j)^{m_{j,t-j}},1^{m_{j,1}}],
\end{equation}
where $m_{j,t},\ldots,m_{j,t-j}$, $m_{j,1}$, and $c_j$ are nonnegative integers
such that 
\begin{equation}\label{prp1}
\sum_{i=t-j}^t m_{j,i}=n_t=\ell q^t+2,
\end{equation} 
\begin{equation}\label{prp2}
m_{j,1}=c_jq^{t-j}+\de_{t+1-j},\mbox{ and } 0\leq c_j\leq \ta_r-1-j.
\end{equation} 

The base case, $j=0$, holds since $\Pa_0$ is 
a subspace partition of $H_0=V(n,q)$ with type $[t^{n_t},1^{n_1}]$, and with 
the properties given in~\eqref{eq:ntn1}, which thus satisfies the conditions specified in \eqref{ih},~\eqref{prp1}, and~\eqref{prp2}.

For the inductive step, suppose that for some $j$, with $0\leq j<\ta_r-1$, we have constructed a subspace partition $\Pa_{j}$ of $H_{j}\cong V(n-j,q)$ of the type given in~\eqref{ih}, and with the properties given in~\eqref{prp1} and~\eqref{prp2}.
We then use Lemma~\ref{lem:HeLe1} to determine the average, $b_{avg,1}$, of the values $b_{H,1}$ over all hyperplanes $H$ of $H_{j}$.
\begin{align}\label{eq:5}
b_{avg,1}
=\frac{m_{j,1} \ta_{n-1-j}}{ \ta_{n-j}}
&=\left(c_jq^{t-j}+\de_{t+1-j}\right)\left(\frac{q^{n-1-j}-1}{q^{n-j}-1}\right)\cr
&<\frac{c_jq^{t-j}+\de_{t+1-j}}{q}\\
&<c_jq^{t-j-1}+\de_{t-j}.\notag
\end{align} 
It follows from~\eqref{eq:5} that there exists a hyperplane $H_{j+1}$ of $H_{j}$ with 
\begin{equation}\label{eq:6}
b_{H_{j+1},1} \leq b_{avg,1} < c_jq^{t-j-1}+\de_{t-j}.
\end{equation}
Next, we apply Lemma~\ref{lem:HeLe0} and~\eqref{eq:dlt}
to the partition 
$\Pa_j$ and the hyperplane $H_{j+1}$ of $H_{j}$ to obtain:
\begin{align}\label{eq:7.1}
1+b_{H_{j+1},1}\;q+\sum_{i=t-j}^t b_{H_{j+1},i}\;q^i=|\Pa_j|
&=n_t+m_{j,1}\cr
&=\ell q^{t}+2+c_jq^{t-j}+\de_{t+1-j}\cr
&=1+\ell q^{t}+c_jq^{t-j}+q\de_{t-j},
\end{align}
where $0\leq c_j\leq \ta_r-1-j$. Simplifying~\eqref{eq:7.1} yields
\begin{equation}\label{eq:7.2}
b_{H_{j+1},1}+\sum_{i={t-j}}^{t} b_{H_{j+1},i}\;q^{i-1}=\ell q^{t-1}+c_jq^{t-j-1}+\de_{t-j}.
\end{equation}
Then, it follows from~\eqref{eq:dlt} and~\eqref{eq:7.2} that
\begin{equation}\label{eq:8}
b_{H_{j+1},1}\md{q^{t-j-1}}=\de_{t-j}.
\end{equation}
By~\eqref{eq:6} and~\eqref{eq:8}, there exists a nonnegative integer $c_{j+1}$ such that 
\begin{equation}\label{prp2+}
m_{j+1,1}=b_{H_{j+1},1}=c_{j+1}q^{t-j-1}+\de_{t-j},
\mbox{ and } 0\leq c_{j+1}\leq \ta_r-2-j.
\end{equation}
Let $\Pa_{j+1}$ be the subspace partition of 
$H_{j+1}$ defined by:
\[\Pa_{j+1}=\{W\cap H_{j+1}:\; W\in \Pa_{j}\}.\]
Since $t-j>2$ (because $j+1<\ta_r<t$) and $\dim(W\cap H_{j+1})\in\{\dim W,\dim W-1\}$ for 
each $W\in\Pa_{j}$, it follows that $\Pa_{j+1}$ is a subspace partition of $H_{j+1}$ of type 
\begin{equation}\label{ih+}
[t^{m_{j+1,t}},(t-1)^{m_{j+1,t-1}}, \dots,(t-j-1)^{m_{j+1,t-j-1}},1^{m_{j+1,1}}],
\end{equation}
where $m_{j+1,t},m_{j+1,t-1},\ldots,m_{j+1,t-j-1}$ satisfy
\begin{equation}\label{prp1+}
\sum_{i=t-j-1}^t m_{j+1,i}=\sum_{i=t-j}^t m_{j,i}=n_t.
\end{equation}

The inductive step follows since $\Pa_{j+1}$ is a subspace partition of $H_{j+1}\cong V(n-j-1,q)$
of the type given in~\eqref{ih+}, which satisfies the conditions in~\eqref{prp1+} and~\eqref{prp2+}. 

Thus far, we have shown that the desired subspace partition $\Pa_j$ of $H_j$ exists
for any integer $j$ such that  $0\leq j\leq \ta_r-1$. 

\bs For the final part of the proof, we set $j=\ta_r-1$ 
and show that the existence of the subspace partition $\Pa_{\ta_r-1}$
of $H_{\ta_r-1}$ leads to a contradiction. If $j=\ta_r-1$, then it follows from~\eqref{prp2} that $c_{\ta_r-1}=0$ and $m_{\ta_r-1,1}=\de_{t+2-\ta_r}$.  
We use Lemma~\ref{lem:HeLe1} one last time to determine the average, $b_{avg,1}$, of the values $b_{H,1}$ over all hyperplanes $H$ of $H_{\ta_r-1}$. 
We obtain, 
\begin{align}\label{eq:9}
b_{avg,1}
=\frac{m_{\ta_r-1,1} \ta_{n-\ta_r}}{\ta_{n-\ta_r+1}}
&=\de_{t+2-\ta_r}\frac{q^{n-\ta_r}-1}{q^{n-\ta_r+1}-1}\cr
&<\frac{\de_{t+2-\ta_r}}{q}\cr
&<\de_{t+1-\ta_r}.
\end{align} 
It follows from~\eqref{eq:9} that there exists a hyperplane $H^*$ of $H_{\ta_r-1}$ with 
\begin{equation}\label{eq:10}
b_{H^*,1} \leq b_{avg,1} < \de_{t+1-\ta_r}.
\end{equation}
We then use Lemma~\ref{lem:HeLe0} and~\eqref{eq:dlt}
on the partition $\Pa_{\ta_r-1}$ and the hyperplane $H^*$ of $H_{\ta_r-1}$ to obtain:
\begin{align}\label{eq:11.1}
1+b_{H^*,1}\;q+\sum_{i=t-\ta_r+1}^t b_{H^*,i}\;q^i=|\Pa_{\ta_r-1}|
&=n_t+m_{\ta_r-1,1}\cr
&=\ell q^{t}+2+\de_{t+2-\ta_r}\\
&=1+\ell q^{t}+q\de_{t+1-\ta_r},\notag
\end{align}
Simplifying~\eqref{eq:11.1} yields
\begin{equation}\label{eq:11.2}
b_{H^*,1}+\sum_{i=t-\ta_r+1}^t b_{H^*,i}\;q^{i-1}=\ell q^{t-1}+\de_{t+1-\ta_r}.
\end{equation}
Then,~\eqref{eq:dlt} and~\eqref{eq:11.2} imply that
\begin{equation}\label{eq:12}
b_{H^*,1}\md{q^{t-\ta_r}}=\de_{t+1-\ta_r}.
\end{equation}
Since $t-\ta_r\geq 1$, it follows from~\eqref{eq:11.2} and~\eqref{eq:12} that 
$b_{H^*,1}\geq\de_{t+1-\ta_r}$, which 
contradicts~\eqref{eq:10}.
Thus, $\mu_q(n,t)\leq \ell q^t+1$ and the proof is complete.
\end{proof}
\bs

\begin{proof}[Proof of Theorem~\ref{thm1}]
For $r=0$, Theorem~\ref{thm1} is just the result of Andr\'e~\cite{An}, and for $r=1$, it follows from Theorem~\ref{BHP}.
For $r\geq 2$, Theorem~\ref{thm1} holds since the lower bound for 
$\mu_q(n,t)$ given in
Theorem~\ref{BHP} and the upper bound given in
Lemma~\ref{lem:main} are equal.
\end{proof}
%
\section{Concluding Remarks}\label{sec:conc}
Applying the same averaging method used in the proof of Lemma~\ref{lem:main}
substantially improves the upper bound given by Drake and Freeman 
(see Theorem~\ref{DF}) in some of the remaining cases, i.e.,
when $t\in[r+1,\ta_r]$. However, we omit those types of results here and will address them elsewhere\footnote{These results have now appeared in~\cite{NS3}.}. For instance, we can prove the following lemma. 
\begin{lemma}\label{lem:extra} Let $n$, $t$, and $r$ be  integers such that $0\leq r<t<n$ and $r=n\md{t}$. 
If $r\geq2$ and $t=\ta_r$, then $\mu_q(n,t)\leq \ell q^t+q$.
\end{lemma}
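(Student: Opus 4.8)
The plan is to argue by contradiction, following the proof of Lemma~\ref{lem:main} almost verbatim; the one new feature is that the descending induction now stops at dimension $2$ rather than at a positive power of $q$. Assume $\mu_q(n,t)\ge\ell q^t+q+1$, so by Remark~\ref{rmk:eq} there is a subspace partition $\Pa_0$ of $V(n,q)$ of type $[t^{n_t},1^{n_1}]$ with $n_t=\ell q^t+q+1$. Since $\ta_n-\ell q^t\ta_t=\ta_{t+r}$, using $1+\de_{i+1}=q\de_i$ from~\eqref{eq:dlt} one computes the base value
\[ n_1=\ta_{t+r}-(q+1)\ta_t=(\ta_r-2)q^{t}+q\de_{t}. \]

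First I would prove by induction on $j$ that for $0\le j\le\ta_r-2$ there is a subspace partition $\Pa_j$ of $H_j\cong V(n-j,q)$ of type $[t^{m_{j,t}},\dots,(t-j)^{m_{j,t-j}},1^{m_{j,1}}]$ with $\sum_{i=t-j}^{t}m_{j,i}=n_t$ and
\[ m_{j,1}=c_jq^{t-j}+q\de_{t-j},\qquad 0\le c_j\le\ta_r-2-j, \]
the base case being the value of $n_1$ above with $c_0=\ta_r-2$. The inductive step copies~\eqref{eq:5}--\eqref{prp2+}: averaging with Lemma~\ref{lem:HeLe1} yields a hyperplane $H_{j+1}$ with $b_{H_{j+1},1}<m_{j,1}/q=c_jq^{t-j-1}+\de_{t-j}$, and Lemma~\ref{lem:HeLe0} together with division by $q$ gives
\[ b_{H_{j+1},1}+\sum_{i=t-j}^{t}b_{H_{j+1},i}\,q^{i-1}=\ell q^{t-1}+1+c_jq^{t-j-1}+\de_{t-j}. \]
Reducing modulo $q^{t-j-1}$ now leaves the residue $1+\de_{t-j}=q\de_{t-j-1}<q^{t-j-1}$, so $m_{j+1,1}=b_{H_{j+1},1}=c_{j+1}q^{t-j-1}+q\de_{t-j-1}$ with $c_{j+1}\le c_j-1$; passing to $\Pa_{j+1}=\{W\cap H_{j+1}:W\in\Pa_j\}$ gives the required type, because the smallest dimension $t-j\ge 3$ for $j\le\ta_r-3$ prevents any collapse onto dimension $1$. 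Hence the construction runs up to $\Pa_{\ta_r-2}$, where $c_{\ta_r-2}=0$, $m_{\ta_r-2,1}=q\de_2=q(q-1)$, and the smallest dimension present is exactly $2$.

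For the last step set $N=n-\ta_r+2$, so $\Pa_{\ta_r-2}$ partitions $V(N,q)$, has $|\Pa_{\ta_r-2}|=\ell q^t+q^2+1$, and its $q(q-1)$ one-subspaces form a point set $P$. Feeding an arbitrary hyperplane $H$ into Lemma~\ref{lem:HeLe0} and dividing by $q$ gives $b_{H,1}+\sum_{i=2}^{t}b_{H,i}q^{i-1}=\ell q^{t-1}+q$, whence $|P\cap H|=b_{H,1}\equiv0\pmod q$ for every $H$. When $q=2$ this is already absurd: $P$ is a pair of distinct points, and a hyperplane through exactly one of them meets $P$ once, contradicting $|P\cap H|\equiv 0\pmod 2$. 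This settles the case $q=2$, and for $r=2$, $t=3$ it recovers the upper bound of Theorem~\ref{EJSSS}.

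The main obstacle is the final step for $q>2$, where $|P|=q(q-1)>q$ and a hyperplane meeting $P$ exactly once need not exist, so the mere congruence $|P\cap H|\equiv0\pmod q$ is not contradictory on its own. Here my plan is to reduce once more: since the average of $b_{H,1}$ equals $q(q-1)\ta_{N-1}/\ta_N<q-1$, there is a hyperplane $H^{*}$ with $b_{H^{*},1}=0$, and $\Pa'=\{W\cap H^{*}:W\in\Pa_{\ta_r-2}\}$ is a partition of $V(N-1,q)$ with $|\Pa'|=n_t$ (every subspace of dimension $\ge2$ survives while all of $P$ is lost). The same division by $q$ now forces $b'_{H',1}\equiv 1\pmod q$ for every hyperplane $H'$, i.e. the new hole set meets every hyperplane in $\equiv1\pmod q$ points. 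Converting this rigidity into a contradiction is the crux: I would either iterate the reduction while tracking how the surviving $2$-subspaces collapse onto the holes, or invoke the theory of minihypers (small point sets whose hyperplane intersections are fixed modulo $q$) to rule out $q(q-1)$ holes compatible with the $2$-subspaces of $\Pa_{\ta_r-2}$. I expect this dimension-$2$ divisibility analysis to be the only ingredient genuinely beyond the averaging method of Lemma~\ref{lem:main}.
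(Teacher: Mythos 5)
First, a point of order: the paper does not actually prove Lemma~\ref{lem:extra}; it states it and explicitly defers the proof elsewhere (``we omit those types of results here''), so there is no in-paper argument to compare yours against, and your proposal must stand on its own. Most of it does. The inductive core is a correct adaptation of Lemma~\ref{lem:main}: the base count $n_1=(\ta_r-2)q^t+q\de_t$, the residue computation $1+\de_{t-j}=q\de_{t-j-1}$ modulo $q^{t-j-1}$, the strict decrement $c_{j+1}\le c_j-1$, and the termination at $\Pa_{\ta_r-2}$ with exactly $q(q-1)$ holes and smallest dimension $2$ all check out, as does the $q=2$ endgame. The genuine gap is exactly where you say it is: for $q>2$ you establish that the hole set $P$, $|P|=q(q-1)$, meets every hyperplane of $H_{\ta_r-2}$ in a multiple of $q$ points, and (after one further section) that the new hole set meets every hyperplane in $\equiv 1\pmod q$ points, but you never reach a contradiction. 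Neither of your proposed continuations is carried out, and neither is routine: the induction does not control $m_{\ta_r-2,2}$ or how many $2$-subspaces collapse under further sections, so ``iterating the reduction'' loses the clean arithmetic that drove the induction, while the minihyper/divisible-set classification you invoke is a substantial external theory, not a finishing touch. As written, the lemma is proved only for $q=2$.

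The missing step has, however, an elementary fix in the spirit of the averaging you already use: replace the first moment by a second moment. Write $a_H=b_{H,1}=|P\cap H|$ as $H$ ranges over the hyperplanes of $H_{\ta_r-2}\cong V(N,q)$, $N=n-\ta_r+2$. Counting incidences and ordered pairs of distinct points of $P$ (any two distinct $1$-subspaces lie in exactly $\ta_{N-2}$ hyperplanes) gives
\begin{equation*}
\sum_H a_H=q(q-1)\,\ta_{N-1},\qquad
\sum_H a_H(a_H-1)=q(q-1)\bigl(q^2-q-1\bigr)\ta_{N-2}.
\end{equation*}
Since every $a_H$ is a nonnegative multiple of $q$, every term $a_H(a_H-q)$ is nonnegative; but
\begin{equation*}
\sum_H a_H(a_H-q)=\sum_H a_H(a_H-1)-(q-1)\sum_H a_H
=q(q-1)\Bigl[(q^2-q-1)\ta_{N-2}-(q-1)\ta_{N-1}\Bigr],
\end{equation*}
and since $(q-1)\ta_{N-1}=(q^2-q)\ta_{N-2}+(q-1)$, the bracket equals $-\ta_{N-2}-(q-1)<0$, a contradiction. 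This closes the argument uniformly for all prime powers $q$ (for $q=2$ it is your two-point argument in disguise), needs no second hyperplane reduction and no minihyper theory, and grafts directly onto the partition $\Pa_{\ta_r-2}$ you constructed.
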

\begin{remark} If $n$, $t$, and $r$ satisfy the hypothesis of Lemma~\ref{lem:extra},
then (after some simplifications) Theorem~\ref{DF} yields 
$\mu_q(n,t)\leq \ell q^t+\left\lceil\frac{q^r}{2}\right\rceil$.
\end{remark}

 As mentioned in the introduction (Section~\ref{sec:1}), our result (Theorem~\ref{thm1}) settles 
almost all the remaining cases of one of the main 
unsolved problems related to partial $(t-1)$-spreads over $\pg(n-1,q)$. As a corollary, Theorem~\ref{thm1} also settles 
several open problems in the area of subspace coding 
that were raised by Etzion~\cite{Et2}, Etzion--Storme~\cite{EtSt}, and Heinlein et al.~\cite{HKKW}.
 
A {\em subspace code} over $\pg(n-1,q)$ is a collection of subspaces of $\pg(n-1,q)$
(e.g., see~\cite[Section~$4$]{EtSt} for a recent survey). In their seminal paper, K\"oetter and  Kschischang~\cite{KK} showed that subspace codes were well-suited for error-correction in the new model for 
information transfer called {\em network coding}~\cite{ACYLY}.
Partial $(t-1)$-spreads form an important class of subspace codes, called {\em Grassmannian codes} (e.g., see~\cite{KK,EV,GR}). 
Our result implies that the largest known partial $(t-1)$-spread codes are optimal for almost all values of $n$, $t$, and $q$. 
\begin{remark}
After submitting this paper, we learned from 
Ameera Chowdhury~\cite{C} that Theorem~\ref{thm1} also determines the clique number of the $q$-Kneser graph.
\end{remark}
The {\em Kneser graph}, $K(n,t)$, is the graph whose vertices are the 
$t$-element subsets of an $n$-set and with any two vertices adjacent if their corresponding subsets are disjoint. The graph $K(n,t)$ is well-studied in the context of extremal combinatorics. For instance, the chromatic number of $K(n,t)$ was determined by Lov\'asz~\cite{L},
and the maximum size of an independent set in $K(n,t)$ is given 
by the celebrated Erd\"os-Ko-Rado theorem~\cite{EKR}.  

The {\em $q$-analogue of the Kneser graph}, $K_q(n,t)$, is the graph whose vertices are the $t$-subspaces of $V(n,q)$ and with any two vertices adjacent if their corresponding $t$-subspaces have trivial intersection. Somewhat recently, the chromatic number of $K_q(n,t)$ has been essentially determined by Blokhuis et al.~\cite{B_al} and Chowdhury et al.~\cite{C_al}. On the other hand, the  maximum size of a independent set in $K_q(n,t)$ was given much earlier by Hsieh~\cite{H} and  Frankl-Wilson~\cite{FW}.  

Determining the clique number of the Kneser graph is trivial. However, the clique number of the $q$-Kneser graph was not known.  The main result of this paper (Theorem~\ref{thm1}) yields the clique number of $K_q(n,t)$ for $t$ large enough. 
\begin{cor}\label{cor1}
Let $r=n\md{t}$ and $0\leq r<t$. If $t>(q^r-1)/(q-1)$, then the clique number of $K_q(n,t)$ is 
\[\mu_n(n,t)=\frac{q^n-q^{t+r}}{q^t-1}+1.\]
\end{cor}

\bs\n{\bf Acknowledgement:}\
We thank Ameera Chowdhury for pointing out the connection of our work to $q$-Kneser graphs and for providing related references.


\end{document}